\documentclass[a4paper,12pt]{amsart}
\usepackage{amsmath,amssymb,amsthm,verbatim}
\usepackage[mathscr]{eucal}
\usepackage{mathpazo}

\theoremstyle{plain}
\newtheorem{theorem}{Theorem}

\newtheorem{proposition}[theorem]{Proposition}

\theoremstyle{definition}
\newtheorem*{definition}{Definition}

\newcommand{\BH}{\mathbb{B}(\mathcal{H})}
\newcommand{\M}{\mathscr{M}}
\newcommand{\1}{\mathbb{1}}
\newcommand{\f}{\varphi}
\newcommand{\tr}{\operatorname{tr}}

\newcommand{\s}{\operatorname{s}}
\renewcommand{\leq}{\leqslant}

\renewcommand{\epsilon}{\varepsilon}

\begin{document}
\title[Commuting states]{On the commutativity of states\\
in von Neumann algebras}
\author{Andrzej \L uczak}
\address{Faculty of Mathematics and Computer Science\\
         \L\'od\'z University\\
         ul. S. Banacha 22\\
         90-238 \L\'od\'z, Poland}
\email{anluczak@math.uni.lodz.pl}
\thanks{Work supported by NCN grant no 2011/01/B/ST1/03994}
\subjclass[2000]{Primary: 46L10; Secondary: 46L30}
\date{}
\begin{abstract}
 The notion of commutativity of two normal states on a von Neumann algebra was defined some time ago by means of the Pedersen-Takesaki theorem. In this note we aim at generalizing this notion to an arbitrary number of states, and obtaining some results on so defined joint commutativity. Also relations between commutativity and broadcastability of states are investigated.
\end{abstract}
\maketitle

\section{Introduction}
Let $\f$ and $\omega$ be normal faithful states on a von Neumann algebra.
The celebrated Pedersen-Takesaki theorem defines commutativity of $\f$ and $\omega$ in terms of their modular automorphism groups. If only $\omega$ is faithful then only \emph{commuting} $\f$ \emph{with} $\omega$ is defined. We attempt to define \emph{joint commutativity} of an arbitrary family of normal states which would generalize the one given by the Pedersen-Takesaki theorem. If the algebra in question is the full algebra $\BH$ of all bounded operators on a Hilbert space, then this joint commutativity amounts to the natural condition of commutativity of the density matrices of the states. Moreover, equivalence between pairwise commutativity as defined by the Pedersen-Takesaki theorem and the joint commutativity is obtained for a convex family of states.

The notion of \emph{broadcastability} of states has become recently an object of growing interest in the field of Quantum Statistics and Quantum Information Theory (see e.g. \cite{BBLW1,BBLW2,BCFJS,KLL}). It turns out that it is closely related to commutativity of states. Namely, in general von Neumann algebras broadcastability implies commutativity while in atomic von Neumann algebras the two notions are equivalent.

\section{Preliminaries}
Let $\M$ be a $\sigma$-finite von Neumann algebra, let $\omega$ be a normal faithful state on $\M$, and let $\f$ be an arbitrary normal state on $\M$. Let $\{\sigma_t^{\omega}:t\in\mathbb{R}\}$ be the
modular automorphism group associated with the state $\omega$. $\f$ is said to \emph{commute} with $\omega$ if
\[
 \f\circ\sigma_t^{\omega}=\f,\qquad\text{for each}\quad
 t\in\mathbb{R}.
\]

The \emph{centralizer} $\M^{\omega}$ of $\omega$ is defined as
\begin{align*}
 \M^{\omega}&=\{x\in\M:\sigma_t^{\omega}(x)=x
 \text{ for each }t\in\mathbb{R}\}\\&=\{x\in\M:\omega(xy)
 =\omega(yx)\text{ for each }y\in\M\}.
\end{align*}

Let $\omega$ be a normal faithful
state. For a positive bounded operator $a$ we define the normal
positive functional $\omega_a$ on $\M$ by
\[
 \omega_a(x)=\omega(a^{1/2}xa^{1/2}),\qquad x\in\M.
\]
In particular, if $a\in\M^{\omega}$ then
$a^{1/2}\in\M^{\omega}$ too, and we have
\[
 \omega_a(x)=\omega(a^{1/2}xa^{1/2})=\omega(ax)=\omega(xa), \qquad
 x\in\M.
\]
If $A$ is a positive selfadjoint operator affiliated with
$\M^{\omega}$ then we define $\omega_A$ as
\[
 \omega_A(x)=\lim_{\epsilon\to0}\omega_{A(\1+\epsilon A)^{-1}}(x).
\]
(The point in the above definition is that the operators
$A(\1+\epsilon A)^{-1}$ are bounded and $A(\1+\epsilon
A)^{-1}\uparrow A$ for $\epsilon\downarrow0$.) Assume now that $A$
is
affiliated with $\M^{\omega}$. Then
$(\1+A)^{-1/2}A^{1/2}(\1+\epsilon A)^{-1/2}$ is in
$\M^{\omega}$, and the operators $(\1+A)^{-1/2}$ and
$A(\1+\epsilon A)^{-1}$ commute, thus for each $x\in\M$ we
have
\begin{equation*}
 \begin{aligned}
  &\omega_A((\1+A)^{-1/2}x(\1+A)^{-1/2})\\=&\lim_{\epsilon\to0}\omega
  (A^{1/2}(\1+\epsilon
  A)^{-1/2}(\1+A)^{-1/2}x(\1+A)^{-1/2}A^{1/2}(\1+\epsilon A)^{-1/2})
  \\=&\lim_{\epsilon\to0}
  \omega((\1+A)^{-1/2}A(\1+\epsilon A)^{-1}(\1+A)^{-1/2}x)\\
  =&\lim_{\epsilon\to0}\omega(A(\1+\epsilon A)^{-1}(\1+A)^{-1}x).
 \end{aligned}
\end{equation*}
The operators $A(\1+\epsilon A)^{-1}(\1+A)^{-1}$ are bounded and converge
strong\-ly as $\epsilon\to0$ to the bounded operator $A(\1+A)^{-1}$.
Since
\[
 \|A(\1+\epsilon A)^{-1}(\1+A)^{-1}\|\leq1,
\]
we have also $A(\1+\epsilon A)^{-1}(\1+A)^{-1}\to A(\1+A)^{-1}$
$\sigma$-strongly, consequently, $A(\1+\epsilon
A)^{-1}(\1+A)^{-1}x\to A(\1+A)^{-1}x$\; $\sigma$-strongly, hence
$\sigma$-weakly, so we have
\[
 \lim_{\epsilon\to0}\omega(A(\1+\epsilon A)^{-1}(\1+A)^{-1}x)=
 \omega(A(\1+A)^{-1}x).
\]
Thus we have obtained the formula
\begin{equation}\label{omega1}
 \omega_A((\1+A)^{-1/2}x(\1+A)^{-1/2})=\omega(A(\1+A)^{-1}x),\qquad
 x\in\M.
\end{equation}

For a more thorough discussion of the above notions the reader is referred to \cite[Sections 2.21, 4.1, 4.4, 4.8, 4.10]{S}.

In what follows we shall repeatedly make use of the
Pedersen-Takesaki
theorem, so for the reader's convenience we state its main
points here in the setup involving states. For its full version
concerning weights \cite[Section 4.10]{S} can be consulted.

Recall that for a normal state $\f$ on $\M$ the symbol $\s(\f)$ denotes the support of $\f$. If $\omega$ is a normal faithful state on $\M$ then \linebreak $[D\f:D\omega]_t$, $t\in\mathbb{R}$, stands for the Connes cocycles (or, in other words, the Connes-Radon-Nikodym derivatives) of $\f$ with respect to $\omega$.
\begin{theorem}[Pedersen-Takesaki]\label{PT}
Let $\omega$ be a faithful normal state on a von Neumann algebra
$\M$, and let $\f$ be a normal state on $\M$. The
following conditions are equivalent
\begin{enumerate}
 \item[(i)] $\f\circ\sigma_t^{\omega}=\f$ for all
     $t\in\mathbb{R}$
 (i.e. $\f$ commutes with $\omega$),
 \item[(ii)] $[D\f:D\omega]_t\in\M^{\omega}$ for all
 $t\in\mathbb{R}$,
 \item[(iii)] $\{[D\f:D\omega]_t:t\in\mathbb{R}\}$ is a strongly
 continuous group of unitary elements of the algebra
 $\s(\f)\M\s(\f)$,
 \item[(iv)] there exists a positive selfadjoint operator $A$
 affiliated with $\M^{\omega}$ such that $\f=\omega_A$.
\end{enumerate}
\end{theorem}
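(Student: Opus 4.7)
The plan is a cyclic proof (i) $\Rightarrow$ (ii) $\Rightarrow$ (iii) $\Rightarrow$ (iv) $\Rightarrow$ (i), with the Connes cocycle identity
\[
[D\f:D\omega]_{s+t}=[D\f:D\omega]_s\,\sigma_s^\omega([D\f:D\omega]_t)
\]
and the intertwining relation $\sigma_t^\f(x)=[D\f:D\omega]_t\,\sigma_t^\omega(x)\,[D\f:D\omega]_t^*$ on $\s(\f)\M\s(\f)$ as the workhorses.

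For (i) $\Rightarrow$ (ii), $\sigma^\omega$-invariance of $\f$ first forces $\s(\f)$ to lie in $\M^\omega$. Applying $\sigma_s^\omega$ to both sides of the cocycle identity and using uniqueness of the cocycle associated to the pair $(\f\circ\sigma_s^\omega,\omega)=(\f,\omega)$ then yields $\sigma_s^\omega([D\f:D\omega]_t)=[D\f:D\omega]_t$, which is (ii). For (ii) $\Rightarrow$ (iii), the centralizer assumption collapses the cocycle identity to the group law $[D\f:D\omega]_{s+t}=[D\f:D\omega]_s[D\f:D\omega]_t$; each cocycle is a partial isometry with initial and final projection $\s(\f)$, so the whole family consists of unitaries of $\s(\f)\M\s(\f)$, and strong continuity is standard.

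For (iii) $\Rightarrow$ (iv), Stone's theorem applied inside $\s(\f)\M\s(\f)$ produces a positive selfadjoint $A$ with $[D\f:D\omega]_t=A^{it}$. Since every $A^{it}$ already belongs to $\M^\omega$, so do its spectral projections, hence $A$ is affiliated with $\M^\omega$. The identity $\f=\omega_A$ is then verified first for the bounded truncations $A_\epsilon=A(\1+\epsilon A)^{-1}$ by using the KMS condition for $\omega$ to analytically continue $t\mapsto[D\f:D\omega]_t$ to $t=-i/2$, and then extended by the limit procedure codified in formula~\eqref{omega1}. For the closing implication (iv) $\Rightarrow$ (i), a direct computation at the bounded level gives, for $a\in\M^\omega$, that $\omega_a\circ\sigma_t^\omega=\omega_a$ (since $\sigma_t^\omega(a)=a$ and $\omega\circ\sigma_t^\omega=\omega$); the limit $A_\epsilon\uparrow A$ then transfers this invariance to $\omega_A$.

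The main obstacle is (iii) $\Rightarrow$ (iv): Stone's theorem only supplies a selfadjoint generator, and to upgrade this to the precise identification $\f=\omega_A$ requires a careful use of the KMS condition for $\omega$ together with a truncation/limit argument, which is exactly what formula~\eqref{omega1} is designed for. The subsidiary check that the spectral calculus of $A$ lives inside $\M^\omega$ and not merely inside $\M$ also belongs here; everything else in the cycle is a fairly mechanical unwinding of the cocycle identity.
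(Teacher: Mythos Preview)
The paper does not prove Theorem~\ref{PT}: it is quoted as background, with a pointer to \cite[Section 4.10]{S} for the full weight version, and is then used as a black box throughout. So there is no proof in the paper to compare your sketch against.

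That said, your cyclic outline has a genuine logical gap at (iii) $\Rightarrow$ (iv). You write ``Since every $A^{it}$ already belongs to $\M^\omega$'', but condition (iii) only asserts that the cocycles form a strongly continuous unitary group in $\s(\f)\M\s(\f)$; membership in the centralizer is condition (ii), which in a cyclic proof you are not entitled to assume at this stage. The missing step is that (iii) forces (ii): the group law $u_{s+t}=u_su_t$ compared with the cocycle identity $u_{s+t}=u_s\sigma_s^\omega(u_t)$ gives $u_s\sigma_s^\omega(u_t)=u_su_t$, and since (iii) also yields $\sigma_s^\omega(\s(\f))=u_s^*u_s=\s(\f)$, both sides live in $\s(\f)\M\s(\f)$ and one may cancel the unitary $u_s$ there to obtain $\sigma_s^\omega(u_t)=u_t$. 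Once this is inserted, $A$ is indeed affiliated with $\M^\omega$ and the rest of your outline is the standard route; the analytic-continuation argument for $\f=\omega_A$ is the genuinely delicate part and would need considerably more detail than you give, but that is where the reference to \cite{S} does the work.
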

For $\mathscr{A}\subset\BH$, by $W^*(\mathscr{A})$ we shall denote the von Neumann algebra generated by $\mathscr{A}$, i.e. the smallest von Neumann algebra containing $\mathscr{A}$.

\section{Commutativity of states}
Let us begin with a simple supplement to the Pedersen-Ta\-ke\-sa\-ki theorem which indicates a possible generalization of the notion of commutativity of states. This result seems to be known at least for faithful states, in any case it is mentioned without proof in \cite[p. 165]{OP}.
\begin{proposition}\label{comm1}
Let $\omega$ be a faithful normal state on a von Neumann algebra
$\M$, and let $\f$ be a normal state on $\M$. The
following conditions are equivalent
\begin{enumerate}
 \item[(i)] $\f$ commutes with $\omega$,
 \item[(ii)] the Connes cocycles $\{[D\f:D\omega]_t:
     t\in\mathbb{R}\}$
 form a commuting family.
\end{enumerate}
\end{proposition}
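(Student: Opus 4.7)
The equivalence is proved via Theorem~\ref{PT}.

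For (i) $\Rightarrow$ (ii), the implication is immediate. By the equivalence (i) $\Leftrightarrow$ (iii) of Pedersen--Takesaki, commutativity of $\f$ with $\omega$ gives that $\{u_t := [D\f:D\omega]_t : t\in\mathbb{R}\}$ is a strongly continuous one-parameter group of unitaries in $\s(\f)\M\s(\f)$, so $u_s u_t = u_{s+t} = u_{t+s} = u_t u_s$ and the family commutes.

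For (ii) $\Rightarrow$ (i), the plan is to verify condition (ii) of Pedersen--Takesaki, that is $u_t\in\M^{\omega}$ for every $t\in\mathbb{R}$, equivalently $\sigma_s^{\omega}(u_t)=u_t$ for all $s,t$. Applying the cocycle identity to $s+t=t+s$ yields
\[
 u_s\sigma_s^{\omega}(u_t) \;=\; u_{s+t} \;=\; u_t\sigma_t^{\omega}(u_s).
\]
Combined with the commutativity hypothesis $u_su_t=u_tu_s$ and the fact that each $u_t$ is unitary in $\s(\f)\M\s(\f)$, this rearranges to the symmetric relation
\[
 u_t^{-1}\sigma_s^{\omega}(u_t) \;=\; u_s^{-1}\sigma_t^{\omega}(u_s),
\]
whose common value I denote $W(s,t)$. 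The cocycles generate, by the commutativity hypothesis, an abelian von Neumann subalgebra $\mathcal{A}:=W^*(\{u_r:r\in\mathbb{R}\})\subset\s(\f)\M\s(\f)$, and $\sigma^{\omega}$ preserves $\mathcal{A}$ since $\sigma_s^{\omega}(u_r)=u_s^{-1}u_{s+r}\in\mathcal{A}$. Thus $W(s,t)$ is a strongly continuous unitary in $\mathcal{A}$ with $W(s,0)=W(0,t)=1$ and symmetry $W(s,t)=W(t,s)$; the target becomes $W\equiv 1$, which gives $\sigma_s^{\omega}(u_t)=u_t$ and hence (ii) of Pedersen--Takesaki.

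The main obstacle is this last step. Passing to the Gelfand realization $\mathcal{A}\cong L^{\infty}(X,\mu)$ presents $\sigma^{\omega}|_{\mathcal{A}}$ as a measure-preserving one-parameter flow, with respect to which both $W(\cdot,t)$ and $W(s,\cdot)$ are continuous $\sigma^{\omega}$-cocycles (inherited from the cocycle identity for $u$); symmetry, continuity, and the boundary conditions should force triviality. An alternative route, perhaps cleaner in practice, is to bypass condition (ii) and verify condition (iv) of Pedersen--Takesaki directly: use functional calculus on the abelian $\mathcal{A}$ together with the KMS/analytic-continuation properties characterizing the Connes cocycle to extract a positive self-adjoint operator $A$ affiliated with $\M^{\omega}$ with $\f=\omega_A$.
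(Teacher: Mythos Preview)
Your argument for (i)$\Rightarrow$(ii) is correct and matches the paper's. The direction (ii)$\Rightarrow$(i), however, is left genuinely incomplete: you reduce the problem to showing $W(s,t)\equiv\s(\f)$ but then only assert that ``symmetry, continuity, and the boundary conditions should force triviality,'' without proof. This is the heart of the matter, and the heuristic does not stand on its own---an abelian algebra can carry nontrivial $\sigma^{\omega}$-cocycles, and there is no general principle forcing a symmetric, continuous map $W\colon\mathbb{R}^2\to\mathcal{A}$ that is a cocycle in each variable with $W(\cdot,0)=W(0,\cdot)=\s(\f)$ to be constant. Your alternative suggestion (verify condition~(iv) of Pedersen--Takesaki directly via functional calculus and KMS properties) is likewise only a sketch. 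A smaller issue: you invoke that each $u_t$ is unitary in $\s(\f)\M\s(\f)$ without justification; in general $u_t$ is only a partial isometry, and one needs the hypothesis (read as $W^*(\{u_r\})$ abelian, hence each $u_t$ normal) to get $u_t^*u_t=u_tu_t^*=\s(\f)$.

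The paper fills the main gap with a modular-theoretic step you do not use. Having checked (as you do) that $\mathcal{A}=W^*(\{u_r\})$ is abelian and $\sigma^{\omega}$-invariant, one observes that $\sigma_t^{\omega}|\mathcal{A}$ leaves $\omega|\mathcal{A}$ invariant and inherits the KMS condition from $\M$; by uniqueness of the modular automorphism group, $\sigma_t^{\omega}|\mathcal{A}=\sigma_t^{\omega|\mathcal{A}}$. Since $\mathcal{A}$ is abelian its modular group is the identity, so $\sigma_t^{\omega}(u_s)=u_s$ for all $s,t$, which is exactly condition~(ii) of Pedersen--Takesaki. This single observation replaces your unfinished $W$-argument entirely.
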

\begin{proof}
(i)$\Longrightarrow$(ii). Since $\f$ commutes with $\omega$ we have
on account of Theorem \ref{PT} that
$\{[D\f:D\omega]_t:t\in\mathbb{R}\}$ is a unitary group on the
algebra $\s(\f)\M\s(\f)$, thus $[D\f:D\omega]_t$ and
$[D\f:D\omega]_s$ commute for all $s,t\in\mathbb{R}$.

(ii)$\Longrightarrow$(i). Denote $u_t=[D\f:D\omega]_t$. By
assumption,
the operators $u_t$ commute. From the properties of the Connes
cocycles (cf. \cite[Section 3.1]{S}) we have for all
$t\in\mathbb{R}$
\begin{equation}\label{sf}
 \sigma_t^{\omega}(\s(\f))=u_t^*u_t=u_tu_t^*=\s(\f),
\end{equation}
in particular, $u_t\in\s(\f)\M\s(\f)$.

Let $\mathscr{R}$ be the von Neumann algebra generated by all $u_t$.
From the cocycle property
\[
 u_{t+s}=u_t\sigma_t^{\omega}(u_s)
\]
we obtain, taking into account equality \eqref{sf},
\[
 u_t^*u_{t+s}=u_t^*u_t\sigma_t^{\omega}(u_s)= \sigma_t^{\omega}(\s(\f))\sigma_t^{\omega}(u_s)=\sigma_t^{\omega}(\s(\f)u_s)
 =\sigma_t^{\omega}(u_s),
\]
showing that $\sigma_t^{\omega}(u_s)\in\mathscr{R}$. It follows that $\sigma_t^{\omega}(\mathscr{R})\subset\mathscr{R}$, i.e. in fact $\sigma_t^{\omega}(\mathscr{R})=\mathscr{R}$. Now $(\sigma_t^{\omega}|\mathscr{R})$ is a one-parameter group of automorphisms of $\mathscr{R}$ such that $(\omega|\mathscr{R})\circ(\sigma_t^{\omega}|\mathscr{R})=\omega|\mathscr{R}$, and the uniqueness of the modular automorphism group yields
\[
 \sigma_t^{\omega|\mathscr{R}}=\sigma_t^{\omega}|\mathscr{R}
\]
(see e.g. \cite[Chapter 9.2]{KR} or \cite[Chapter 10.17]{SZ}).
But $\sigma_t^{\omega|\mathscr{R}}=\operatorname{id}_{\mathscr{R}}$ because $\mathscr{R}$ is abelian, consequently
\[
 \sigma_t^{\omega}(u_s)=u_s,
\]
showing that $u_s\in\M^{\omega}$, thus on account of Theorem \ref{PT} $\f$ commutes with $\omega$.
\end{proof}
The notion of commutativity for two states has been defined
with at least one of them being faithful, thus it is not clear how
it can be generalized to a family of states which may contain also
non-faithful elements, in which case the naturally-looking definition as pairwise commutativity fails. One possible attempt is presented below. As we shall see it agrees with a rather straightforward notion of commutativity for states on the algebra $\BH$ which can be defined simply as the commutativity of their density matrices.

Suppose that a normal state $\f$ commutes with a faithful normal
state $\omega$. Then according to Theorem \ref{PT} we have
$\f=\omega_A$ for some positive selfadjoint operator $A$ affiliated
with $\M^{\omega}$. Thus $A$ may be considered as a
``density matrix'' of $\f$ with respect to $\omega$ in a way similar
to the one suggested by the relation $\rho=\tr_{D_{\rho}}$ for arbitrary normal state
$\rho$, where $D_{\rho}$ is the customary density matrix of $\rho$,
and $\tr$ is the canonical trace on $\BH$. Moreover, by
\cite[Section
4.8]{S} we have
\[
 [D\f:D\omega]_t=A^{it}.
\]
These considerations lead to the following definition of
commutativity of states.
\begin{definition}
Let $\Gamma$ be an arbitrary family of normal states on a
von Neumann algebra $\M$ containing a faithful state. The states
in $\Gamma$ are said to \emph{commute} if for arbitrary faithful
state $\omega\in\Gamma$, the Connes cocycles $\{[D\rho:D\omega]_t:
t\in\mathbb{R},\rho\in\Gamma\}$ form a commuting family, i.e. the von Neumann algebra $W^*(\{[D\rho:D\omega]_t: t\in\mathbb{R},\rho\in\Gamma\})$ is abelian.
\end{definition}
Observe that this definition is consistent because if $\f$ is another normal faithful state in $\Gamma$ (thus, in particular, commuting with $\omega$) then we have
\[
 W^*(\{[D\rho:D\f]_t: t\in\mathbb{R},\rho\in\Gamma\})=W^*(\{[D\rho:D\omega]_t: t\in\mathbb{R},\rho\in\Gamma\}).
\]
Indeed, for each $\rho\in\Gamma$ the chain rule and the formula for the inverse of the Connes cocycles (cf. \cite[Sections 3.4, 3.5]{S}) yield
\begin{align*}
 [D\rho:D\f]_t&=[D\rho:D\omega]_t[D\omega:D\f]_t\\
 &=[D\rho:D\omega]_t[D\f:D\omega]_t^{-1}
 =[D\rho:D\omega]_t[D\f:D\omega]_{-t},
\end{align*}
since on account of Theorem \ref{PT} $\{[D\f:D\omega]_t:t\in\mathbb{R}\}$ is a unitary group. Consequently,
\[
 [D\rho:D\f]_t\in W^*(\{[D\rho:D\omega]_t: t\in\mathbb{R},\rho\in\Gamma\}),
\]
thus
\[
 W^*(\{[D\rho:D\f]_t: t\in\mathbb{R},\rho\in\Gamma\})\subset W^*(\{[D\rho:D\omega]_t: t\in\mathbb{R},\rho\in\Gamma\}),
\]
and by the same token we obtain the reverse inclusion.

For the full algebra $\BH$ we have
\begin{theorem}\label{commst}
Let $\Gamma$ be an arbitrary subset of all normal states on $\BH$
containing a faithful state. The following conditions are equivalent
\begin{enumerate}
 \item[(i)] the states in $\Gamma$ commute,
 \item[(ii)] the density matrices of the states in $\Gamma$
     commute.
\end{enumerate}
\end{theorem}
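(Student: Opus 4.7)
The approach is to use the explicit description of modular theory on $\BH$. Write $D_\rho$ for the density matrix of $\rho\in\Gamma$. Since $\omega$ is faithful, $D_\omega$ has trivial kernel, and on $\BH$ the modular group takes the form $\sigma^\omega_t(x)=D_\omega^{it}xD_\omega^{-it}$ while the centralizer equals the commutant $\{D_\omega\}'$. For any normal state $\rho$ the Connes cocycle admits the explicit expression
\[
 [D\rho:D\omega]_t=D_\rho^{it}D_\omega^{-it},\qquad t\in\mathbb{R},
\]
in which $D_\rho^{it}$ is the partial isometry with initial and final projection $\s(\rho)$ obtained via functional calculus.

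Granting these formulas, the direction (ii)$\Rightarrow$(i) is immediate: if the $D_\rho$ commute pairwise then in particular each commutes with $D_\omega$, so by functional calculus all of the operators $D_\rho^{it}$ and $D_\omega^{is}$ commute, hence so do the cocycles $D_\rho^{it}D_\omega^{-it}$, and the von Neumann algebra they generate is abelian.

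For (i)$\Rightarrow$(ii) I first fix $\rho\in\Gamma$. The subfamily $\{[D\rho:D\omega]_t:t\in\mathbb{R}\}$ of the commuting family in (i) is itself commuting, so Proposition~\ref{comm1} yields that $\rho$ commutes with $\omega$, and Theorem~\ref{PT}(iv) produces a positive self-adjoint operator $A_\rho$ affiliated with $\{D_\omega\}'$ with $\rho=\omega_{A_\rho}$ and $[D\rho:D\omega]_t=A_\rho^{it}$. Since $A_\rho$ and $D_\omega$ commute strongly, a truncation argument based on \eqref{omega1} together with the identity $\omega(ay)=\tr(D_\omega ay)$ for $a$ in the centralizer identifies the density matrix as $D_\rho=D_\omega A_\rho$; in particular $D_\rho$ commutes with $D_\omega$.

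To finish, observe that by hypothesis the partial-isometry groups $\{A_{\rho_1}^{it}\}$ and $\{A_{\rho_2}^{is}\}$ commute for all $\rho_1,\rho_2\in\Gamma$ and all $s,t\in\mathbb{R}$; applied on the supports, Stone's theorem then forces the spectral projections of $A_{\rho_1}$ and $A_{\rho_2}$ to commute, so the self-adjoint generators themselves commute. Combined with the previous paragraph, $D_\omega$ together with all the $A_\rho$ generate an abelian von Neumann subalgebra of $\BH$ containing every $D_\rho$, whence $\{D_\rho:\rho\in\Gamma\}$ is commuting. The one nontrivial technical point is exactly this last passage from commutativity of the one-parameter unitary/partial-isometry groups $\{A_\rho^{it}\}$ to commutativity of their (possibly unbounded) positive self-adjoint generators.
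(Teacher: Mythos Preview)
Your argument is correct, and for (ii)$\Rightarrow$(i) it coincides with the paper's. For (i)$\Rightarrow$(ii) the overall architecture is also the same as the paper's: first show each $D_\rho$ commutes with $D_\omega$, then use the assumed commutativity of the cocycles together with the formula $[D\rho:D\omega]_t=D_\rho^{it}D_\omega^{-it}$ to conclude that $D_{\rho_1}^{it}$ and $D_{\rho_2}^{is}$ commute for arbitrary $\rho_1,\rho_2\in\Gamma$.

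The difference is in how the first step is carried out. You invoke part~(iv) of the Pedersen--Takesaki theorem to obtain $A_\rho$ affiliated with $\{D_\omega\}'$, then argue that $D_\rho=D_\omega A_\rho$ and hence $D_\rho$ commutes with $D_\omega$. The paper instead uses part~(iii): since $\{D_\rho^{it}D_\omega^{-it}\}$ is a one-parameter group, expanding $D_\rho^{i(t+s)}D_\omega^{-i(t+s)}$ two ways and cancelling gives $D_\rho^{is}D_\omega^{-it}=D_\omega^{-it}D_\rho^{is}$ directly. This is more economical: it stays entirely at the level of bounded operators and avoids the trace-class identification $D_\rho=\overline{D_\omega A_\rho}$ for possibly unbounded $A_\rho$, which is the step you gloss over with ``a truncation argument based on~\eqref{omega1}''. (That identification is true, but it, rather than the passage from commuting groups to commuting generators that you flag at the end, is where the analytic care in your route actually lies.) In fact, once you have $A_\rho^{it}=D_\rho^{it}D_\omega^{-it}$ with $A_\rho^{it}\in\{D_\omega\}'$, you can read off $D_\rho^{it}=A_\rho^{it}D_\omega^{it}$ and proceed without ever forming the product $D_\omega A_\rho$.
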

\begin{proof}
Before starting a proof of the equivalence (i)$\Longleftrightarrow$(ii) let us make some general remarks. For an arbitrary normal state $\f$ on $\BH$ with density matrix
$D_{\f}$,
and the canonical trace $\tr$ we have $\f=\tr_{D_{\f}}$, thus on
account of \cite[Section 4.8]{S}
\[
 [D\f:D(\tr)]_t=D_{\f}^{it}.
\]
Consequently, if $\omega$ is a faithful normal state, then we obtain the formula
\begin{equation}\label{dm}
 \begin{aligned}
  \left[D\f:D\omega\right]_{t}&=[D\f:D(\tr)]_t[D(\tr):D\omega]_t\\
  &=[D\f:D(\tr)]_t[D\omega:D(\tr)]_t^{-1}=D_{\f}^{it}D_{\omega}^{-it}.
 \end{aligned}
\end{equation}

(i)$\Longrightarrow$(ii). Pick a faithful state $\omega$ in
$\Gamma$,
and let $\rho\in\Gamma$ be arbitrary. Since $\rho$ commutes with
$\omega$ we infer on account of Theorem \ref{PT} that \linebreak
$\{[D\rho:D\omega]_t:t\in\mathbb{R}\}$ forms a unitary group in
$\s(\rho)\M\s(\rho)$. Consequently, using formula
\eqref{dm} we obtain for any $s,t\in\mathbb{R}$
\[
 D_{\rho}^{it}D_{\rho}^{is}D_{\omega}^{-it}D_{\omega}^{-is}
 =D_{\rho}^{i(t+s)}D_{\omega}^{-i(t+s)}=D_{\rho}^{it}D_{\omega}^{-it}
 D_{\rho}^{is}D_{\omega}^{-is},
\]
yielding the equality
\[
 D_{\rho}^{is}D_{\omega}^{-it}=D_{\omega}^{-it}D_{\rho}^{is},
\]
which shows that the density matrices $D_{\rho}$ and $D_{\omega}$
commute.

Take arbitrary $\f\in\Gamma$. Proposition \ref{comm1} says that the
Connes cocycles $[D\rho:D\omega]_t$ and $[D\f:D\omega]_s$ commute,
so
taking into account relation \eqref{dm} we have
\[
 D_{\rho}^{it}D_{\omega}^{-it}D_{\f}^{is}D_{\omega}^{-is}
 =D_{\f}^{is}D_{\omega}^{-is}D_{\rho}^{it}D_{\omega}^{-it},
\]
and since by virtue of the first part of the proof $D_{\omega}$
commutes with $D_{\rho}$ and $D_{\f}$, we obtain
\[
 D_{\rho}^{it}D_{\f}^{is}=D_{\f}^{is}D_{\rho}^{it},
\]
hence $D_{\rho}$ and $D_{\f}$ commute.

(ii)$\Longrightarrow$(i). Let $\omega$ be an arbitrary faithful
state
in $\Gamma$. For arbitrary $\rho,\f\in\Gamma$ the density matrices
$D_{\rho},\,D_{\f}$ and $D_{\omega}$ commute, which on account of
relation \eqref{dm} clearly gives the commutativity of the Connes
cocycles, thus by virtue of Proposition \ref{comm1} the commutativity of the
states in $\Gamma$.
\end{proof}
The theorem above can be generalized in the following way. Let $\M$ be a semifinite von Neumann algebra, and let $\tau$ be a normal semifinite faithful trace on $\M$. Then we have an isometric isomorphism $\M_*\simeq L^1(\M,\tau)$ given by the formula $\M_*\ni\f\mapsto h_{\f}\in L^1(\M,\tau)$,
\[
 \f(x)=\tau(h_{\f}x),\qquad x\in\M.
\]
If $\f$ is a state then $h_{\f}$ is positive, and on
account of \cite[Section 4.8]{S} we have
\[
 [D\f:D\tau]_t=h_{\f}^{it},\qquad t\in\mathbb{R},
\]
where $h_{\f}^{it}$ are unitaries in the algebra $\s(\f)\M\s(\f)$
(see \cite{N}, \cite{T2} or \cite{Y} for a more thorough account of the theory of noncommutative \linebreak $L^p$-spaces).
Now reasoning as in the proof of Theorem \ref{commst} we get
\begin{theorem}\label{commst1}
Let $\Gamma$ be an arbitrary set of normal states on a semifinite \linebreak von Neumann algebra $\M$, let $\tau$ be a normal semifinite faithful trace on $\M$, and assume that $\Gamma$ contains a faithful state. Then the following conditions are equivalent
\begin{enumerate}
 \item[(i)] the states in $\Gamma$ commute,
 \item[(ii)] the operators $h_{\f}$, $\f\in\Gamma$,
     commute, where by the commutativity of possibly unbounded operators $h_{\f}$ and $h_{\psi}$ we mean the commutativity of the families $\{h_{\f}^{it}:t\in\mathbb{R}\}$ and $\{h_{\psi}^{it}:t\in\mathbb{R}\}$.
\end{enumerate}
\end{theorem}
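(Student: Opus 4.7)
The plan is to mimic the proof of Theorem~\ref{commst} line by line, with the semifinite trace $\tau$ playing the role of the canonical trace on $\BH$ and the positive operator $h_\f\in L^1(\M,\tau)$ playing the role of the density matrix $D_\f$. The crucial preliminary step is to derive the exact analogue of formula~\eqref{dm}: for any faithful normal state $\omega$ and any normal state $\f$ on $\M$,
\[
 [D\f:D\omega]_t=h_\f^{it}\,h_\omega^{-it},\qquad t\in\mathbb{R}.
\]
This should follow from the chain rule and the inverse-cocycle formula applied across $\tau$, i.e.\ $[D\f:D\omega]_t=[D\f:D\tau]_t[D\omega:D\tau]_t^{-1}$, combined with the identifications $[D\f:D\tau]_t=h_\f^{it}$ and $[D\omega:D\tau]_t=h_\omega^{it}$ recalled immediately before the statement.

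With this formula in place, the implication (i)$\Longrightarrow$(ii) splits into the same two stages as in Theorem~\ref{commst}. First, for a fixed faithful $\omega\in\Gamma$ and an arbitrary $\rho\in\Gamma$, the cocycle $\{[D\rho:D\omega]_t\}$ is a unitary group by Theorem~\ref{PT}, so expanding the group law $[D\rho:D\omega]_{t+s}=[D\rho:D\omega]_t[D\rho:D\omega]_s$ via the key formula and cancelling the automatically commuting factors $h_\rho^{it}h_\rho^{is}$ on one side and $h_\omega^{-it}h_\omega^{-is}$ on the other yields $h_\rho^{is}h_\omega^{-it}=h_\omega^{-it}h_\rho^{is}$, i.e.\ $h_\rho$ commutes with $h_\omega$. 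Second, for arbitrary $\rho,\f\in\Gamma$ the joint commutativity hypothesis delivers commutativity of $[D\rho:D\omega]_t$ with $[D\f:D\omega]_s$; expanding both cocycles by the formula and using the first stage to move the $h_\omega$-factors past $h_\rho^{it}$ and $h_\f^{is}$ leaves exactly $h_\rho^{it}h_\f^{is}=h_\f^{is}h_\rho^{it}$.

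The converse (ii)$\Longrightarrow$(i) is a direct substitution: commutativity of the groups $\{h_\f^{it}\}_{\f\in\Gamma}$ makes the products $h_\rho^{it}h_\omega^{-it}=[D\rho:D\omega]_t$ a commuting family indexed by $\rho\in\Gamma$ and $t\in\mathbb{R}$, and then Proposition~\ref{comm1} together with the very definition of joint commutativity finishes the argument.

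I expect the only genuine obstacle to be the justification of the key formula, since $\tau$ is a weight rather than a state and so the chain rule and the identity $[D\f:D\tau]_t=h_\f^{it}$ must be invoked in their weight-theoretic form rather than the state-theoretic one used in Theorem~\ref{commst}. Both are standard and are covered by the references cited just before the statement; once they are granted, the remaining manipulations are literally those of Theorem~\ref{commst}, with $h_\f$ in place of $D_\f$ throughout.
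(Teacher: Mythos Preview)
Your proposal is correct and matches the paper's own approach exactly: the paper simply says ``reasoning as in the proof of Theorem~\ref{commst}'' with $\tau$ and $h_\f$ in place of the canonical trace and $D_\f$, which is precisely what you outline. Your identification of the key formula $[D\f:D\omega]_t=h_\f^{it}h_\omega^{-it}$ via the weight-theoretic chain rule and the cocycle identification $[D\f:D\tau]_t=h_\f^{it}$ is the only point requiring care, and the paper handles it in the same way by appeal to \cite[Section 4.8]{S}.
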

For convex $\Gamma$ we have the following equivalence of commutativity and pairwise commutativity of states.
\begin{theorem}\label{comm}
Let $\Gamma$ be a convex set of normal states on a von Neumann algebra $\M$ containing a faithful state. The following conditions are equivalent
\begin{enumerate}
 \item[(i)] the states in $\Gamma$ commute,
 \item[(ii)] for arbitrary faithful state $\omega\in\Gamma$ each
 state $\rho\in\Gamma$ commutes with $\omega$ (``pairwise commutativity'').
\end{enumerate}
\end{theorem}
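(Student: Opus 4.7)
The implication (i)$\Rightarrow$(ii) is essentially a restriction of the hypothesis: if the whole family $\{[D\rho:D\omega]_t:t\in\mathbb{R},\rho\in\Gamma\}$ generates an abelian algebra for some faithful $\omega\in\Gamma$, then for each individual $\rho\in\Gamma$ the subfamily $\{[D\rho:D\omega]_t:t\in\mathbb{R}\}$ is commuting, and Proposition \ref{comm1} gives the pairwise commutativity of $\rho$ and $\omega$.

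The substance is in (ii)$\Rightarrow$(i), and my plan is as follows. Fix a faithful $\omega_0\in\Gamma$. Condition (ii) together with Theorem \ref{PT}(iv) supplies, for each $\rho\in\Gamma$, a positive selfadjoint operator $A_\rho$ affiliated with $\M^{\omega_0}$ with $\rho=(\omega_0)_{A_\rho}$ and $[D\rho:D\omega_0]_t=A_\rho^{it}$. By Proposition \ref{comm1}, proving (i) reduces to showing that $A_\rho^{it}$ commutes with $A_{\rho'}^{is}$ for all $\rho,\rho'\in\Gamma$ and $s,t\in\mathbb{R}$. Condition (ii) applied solely with $\omega=\omega_0$ is manifestly too weak for this; convexity must be exploited, and this is the main obstacle of the argument.

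The device is to introduce, for arbitrary $\rho,\rho'\in\Gamma$, the auxiliary state $\mu=\tfrac{1}{2}\rho+\tfrac{1}{2}\omega_0$. Convexity gives $\mu\in\Gamma$; faithfulness of $\omega_0$ makes $\mu$ faithful; and from the additivity of $a\mapsto(\omega_0)_a$ on positive elements affiliated with $\M^{\omega_0}$ (extended to the unbounded case as in Section 2) one obtains $\mu=(\omega_0)_{A_\mu}$ with $A_\mu=\tfrac{1}{2}(A_\rho+\1)$, which is invertible and still affiliated with $\M^{\omega_0}$. Applying (ii) with the faithful state $\mu$ now tells us that $\rho'$ commutes with $\mu$, so by Theorem \ref{PT}(ii) we have $[D\rho':D\mu]_t\in\M^\mu$. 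The chain rule, together with $[D\omega_0:D\mu]_t=A_\mu^{-it}$ (which holds because $A_\mu^{it}$ is a unitary in $\M^{\omega_0}$, so the cocycle relation $[D\mu:D\omega_0]_t[D\omega_0:D\mu]_t=\1$ can be inverted), yields
\[
 [D\rho':D\mu]_t=A_{\rho'}^{it}A_\mu^{-it}.
\]

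The last step is to convert this membership into an explicit commutation relation. Using $\sigma_s^\mu=\mathrm{Ad}(A_\mu^{is})\circ\sigma_s^{\omega_0}$ and the fact that $\sigma_s^{\omega_0}$ fixes both $A_\mu^{-it}$ and $A_{\rho'}^{it}$ (they are affiliated with $\M^{\omega_0}$), the invariance of $A_{\rho'}^{it}A_\mu^{-it}$ under $\sigma_s^\mu$ collapses after a short calculation to $A_\mu^{is}A_{\rho'}^{it}=A_{\rho'}^{it}A_\mu^{is}$. Thus $A_{\rho'}^{it}$ commutes with every element of the abelian algebra $W^*(A_\mu)$. Since $A_\rho=2A_\mu-\1$ has the same spectral projections as $A_\mu$, one has $A_\rho^{it'}\in W^*(A_\mu)$, and therefore $A_\rho^{it'}$ and $A_{\rho'}^{it}$ commute, completing the proof. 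The whole argument is really the isolation of the right auxiliary state $\mu$; once this choice is made, the remainder is routine cocycle book-keeping.
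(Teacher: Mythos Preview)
Your argument is correct and rests on the same key device as the paper: exploit convexity by forming the faithful auxiliary state $\mu=\tfrac{1}{2}(\rho+\omega_0)\in\Gamma$, whose Radon--Nikodym operator relative to $\omega_0$ is $\tfrac{1}{2}(A_\rho+\1)$, hence bounded below and invertible. The execution differs slightly. The paper introduces \emph{two} such auxiliaries, $\tfrac{\rho+\omega}{2}$ and $\tfrac{\f+\omega}{2}$, computes the cocycle between them as $b^{-it}a^{it}$, and reads off the commutation of $a$ and $b$ from the group law (Theorem~\ref{PT}(iii)); you use a \emph{single} auxiliary $\mu$, invoke Theorem~\ref{PT}(ii) to place $[D\rho':D\mu]_t=A_{\rho'}^{it}A_\mu^{-it}$ in $\M^\mu$, and then unwind $\sigma_s^\mu=\mathrm{Ad}(A_\mu^{is})\circ\sigma_s^{\omega_0}$. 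Both routes arrive at the same commutation relation and the final spectral-calculus step is identical; your variant is marginally more economical. One small point of rigor: the identification $A_\mu=\tfrac{1}{2}(A_\rho+\1)$ via ``additivity of $a\mapsto(\omega_0)_a$ in the unbounded case'' is not established in Section~2 as stated. The paper secures the equivalent fact by the direct computation $\big(\tfrac{\rho+\omega}{2}\big)_a=\omega$ with $a=\big(\tfrac{\1+A}{2}\big)^{-1}$ using formula~\eqref{omega1}; you should either reproduce that calculation or note that $\mu\circ\sigma_t^{\omega_0}=\mu$ gives $[D\mu:D\omega_0]_t=C^{it}$ for some $C$, and then identify $C$ via $[D\omega_0:D\mu]_t=a^{it}$.
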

\begin{proof}
(i)$\Longrightarrow$(ii). Obvious.

(ii)$\Longrightarrow$(i). Take arbitrary faithful state
$\omega\in\Gamma$ and arbitrary \linebreak $\rho\in\Gamma$. Since
$\rho$ and $\omega$ commute we have on account of Theorem \ref{PT}
that $\{[D\rho:D\omega]_t:t\in\mathbb{R}\}$ is a unitary group in
the
algebra $\s(\rho)\M\s(\rho)$, thus $[D\rho:D\omega]_t$ and
$[D\rho:D\omega]_s$ commute for all $s,t\in\mathbb{R}$.

Let $\f$ be another state in $\Gamma$. Then $\rho=\omega_A$, and
$\f=\omega_B$ for some positive selfadjoint operators $A,\,B$
affiliated with $\M^{\omega}$. Put
\[
 a=\bigg(\frac{\1+A}{2}\bigg)^{-1}.
\]
Then $a\in\M^{\omega}$. Consider the faithful state
$\frac{\rho+\omega}{2}\in\Gamma$. We have on account of equality
\eqref{omega1}
\begin{align*}
 &\Big(\frac{\rho+\omega}{2}\Big)_a(x)=\frac{\rho+\omega}{2}
 \Bigg(\bigg(\frac{\1+A}{2}\bigg)^{-1/2}x\bigg(\frac{\1+A}{2}\bigg)^{-1/2}\Bigg)\\
 &=(\rho+\omega)\Big((\1+A)^{-1/2}x(\1+A)^{-1/2}\Big)\\&=\rho\Big((\1+A)^{-1/2}x(\1+A)^{-1/2}\Big)
 +\omega\Big((\1+A)^{-1/2}x(\1+A)^{-1/2}\Big)\\&=\omega_A\Big((\1+A)^{-1/2}x(\1+A)^{-1/2}\Big)
 +\omega\Big((\1+A)^{-1}x\Big)\\&=\omega\Big(A(\1+A)^{-1}x\Big)+\omega\Big((\1+A)^{-1}x\Big)\\&=
 \omega\Big(\Big(A(\1+A)^{-1}+(\1+A)^{-1}\Big)x\Big)=\omega(x).
\end{align*}
Thus we have obtained that
\[
 \omega=\Big(\frac{\rho+\omega}{2}\Big)_a\;,
\]
which on account of \cite[Section 4.8]{S} yields the equality
\[
 \Big[D\omega:D\frac{\rho+\omega}{2}\Big]_t=a^{it}.
\]
Analogously, putting
\[
 b=\bigg(\frac{\1+B}{2}\bigg)^{-1}\;,
\]
we obtain that
\[
 \omega=\Big(\frac{\f+\omega}{2}\Big)_b\;,
\]
hence
\[
 \Big[D\omega:D\frac{\f+\omega}{2}\Big]_t=b^{it}.
\]
By the chain rule and the formula for the inverse of the Connes
cocycles we get
\begin{align*}
 \Big[D\frac{\f+\omega}{2}:D\frac{\rho+\omega}{2}\Big]_t&=
 \Big[D\frac{\f+\omega}{2}:D\omega\Big]_t\Big[D\omega:D\frac{\rho+\omega}{2}\Big]_t
 \\&=\Big[D\omega:D\frac{\f+\omega}{2}\Big]_t^{-1}\Big[D\omega:D\frac{\rho+\omega}{2}\Big]_t
 =b^{-it}a^{it}.
\end{align*}
Since the states $\frac{\f+\omega}{2}$ and $\frac{\rho+\omega}{2}$
commute we infer, again by Theorem~\ref{PT}, that
\[
 \Big\{\Big[D\frac{\f+\omega}{2}:D\frac{\rho+\omega}{2}\Big]_t
 =b^{-it}a^{it}:t\in\mathbb{R}\Big\}
\]
is a unitary group, which yields for all $s,t\in\mathbb{R}$ the
equality
\begin{align*}
 b^{-is}b^{-it}a^{is}a^{it}=b^{-i(s+t)}a^{i(s+t)}=b^{-is}a^{is}b^{-it}a^{it},
\end{align*}
and thus
\[
 b^{-it}a^{is}=a^{is}b^{-it}.
\]
Consequently, the unitary groups $\{a^{it}:t\in\mathbb{R}\}$ and
$\{b^{it}:t\in\mathbb{R}\}$ commute, which yields that $a$ and $b$
commute. It follows that $A^{it}$ and $B^{is}$ commute for all
$s,t\in\mathbb{R}$. Since by virtue of \cite[Section 4.8]{S} we have
\[
 [D\rho:D\omega]_t=A^{it},\qquad [D\f:D\omega]_s=B^{is},
\]
condition (i) follows.
\end{proof}
Finally, let us say a few words about connections between commutativity and broadcastability of states. Recall that a family of normal states $\Gamma$ on a von Neumann algebra $\M$ is said to be \emph{broadcastable} if there is a normal unital completely positive map $K\colon\M\overline{\otimes}\M\to\M$ (called a \emph{channel}) such that for each $\rho\in\Gamma$ we have
\[
 \rho(K(x\otimes\1))=\rho(K(\1\otimes x))=\rho(x),\qquad x\in\M.
\]
Assume now that $\Gamma$ contains a faithful state $\omega$. Then from \cite[Theorem 12]{KLL} it follows that the states in $\Gamma$ commute. If $\M$ is atomic then we have also the reverse implication. Namely, the von Neumann algebra $\mathscr{R}=W^*(\{[D\rho:D\omega]_t:t\in\mathbb{R},\rho\in\Gamma\})$ is abelian, and as was shown in the proof of Proposition \ref{comm1}, for the modular automorphism group $(\sigma_t^{\omega})$ we have
\[
 \sigma_t^{\omega}([D\rho:D\omega]_s)=[D\rho:D\omega]_s,
\]
which yields the equality
\[
 \sigma_t^{\omega}(\mathscr{R})=\mathscr{R}.
\]
(As a matter of fact this equality is valid in the general case irrespective of the abelianess of the algebra $\mathscr{R}$.) Thus there exists a normal faithful conditional expectation from $\M$ onto $\mathscr{R}$ which implies that this algebra is atomic (since $\M$ was such). Now from \cite[Theorem 12]{KLL} it follows that $\Gamma$ is broadcastable.

\end{document}